\documentclass[10pt]{amsart}

\usepackage{amsmath,amsfonts,latexsym,amssymb,mathrsfs,setspace,enumerate,color,cite,graphicx,epsf}


\newtheorem{thm}{Theorem}
\newtheorem{lem}[thm]{Lemma}

\numberwithin{equation}{section}
\numberwithin{thm}{section}

\newtheorem{conj}[thm]{Conjecture}
\newtheorem{question}[thm]{Question}

\newcommand{\rat}{\mathbb Q}

\newcommand{\alg}{\overline\rat}
\newcommand{\algt}{\alg^{\times}}

\newcommand{\intg}{\mathbb Z}
\newcommand{\nat}{\mathbb N}

\newcommand{\lcm}{\mathrm{lcm}}

\newcommand{\tor}{\mathrm{Tor}}

\newcommand{\gal}{\mathrm{Gal}}
\newcommand{\norm}{\mathrm{Norm}}
\newcommand{\rad}{\mathrm{Rad}}

\def\Z{\mathbb{Z}}
\def\N{\mathbb{N}}
\def\Q{\mathbb{Q}}
\def\al{\alpha}

\newcommand{\comment}[1]{}

\title[The $t$-metric Mahler measure]{The $t$-metric Mahler measures of surds of rational numbers}

\author[J. Jankauskas \and C.L. Samuels]{Jonas Jankauskas \and Charles L. Samuels}
\address{Vilnius University, Department of Probability Theory and Number Theory, Faculty of Mathematics and Informatics, Naugarduko 24, LT-03225 Vilnius, Lithuania}
\email{jonas.jankauskas@gmail.com}
\thanks{The first author was supported by the Lithuanian Research Council (student research support project) during his visit to the IRMACS Centre, Simon Fraser University.}
\address{Simon Fraser University, Department of Mathematics, 8888 University Drive, Burnaby, BC V5A 1S6, Canada \newline \indent The University of British Columbia, 
Department of Mathematics, 1984 Mathematics Road, Vancouver, BC V6T 1Z2, Canada}
\email{csamuels@math.ubc.ca}
\thanks{The second author was supported by NSERC of Canada}

\subjclass[2010]{11R04, 11R09 (Primary), 11C08, 12E05 (Secondary)}
\keywords{Weil height, Mahler measure, metric Mahler measure, Lehmer's problem}

\begin{document}

\begin{abstract}
A. Dubickas and C. Smyth introduced the metric Mahler measure
\begin{equation*}
  M_1(\al) = \inf\left\{\sum_{n=1}^N M(\al_n): N \in \N, \al_1 \cdots \al_N = \al\right\},
\end{equation*}
where $M(\al)$ denotes the usual (logarithmic) Mahler measure of $\al \in \alg$. This definition extends in a natural way
to the $t$-metric Mahler measure by replacing the sum with the usual $L_t$ norm of the vector $(M(\al_1), \dots, M(\al_N))$ for any $t\geq 1$. 
For $\al \in \rat$, we prove that the infimum in $M_t(\al)$ may be attained using only rational points, establishing an earlier conjecture of the second author.
We show that the natural analogue of this result fails for general $\alpha\in\alg$ by giving an infinite family of quadratic counterexamples.
As part of this construction, we provide an explicit formula to compute $M_t(D^{1/k})$ for a squarefree $D \in \N$.
\end{abstract}

\maketitle

\section{Introduction}

Let $f$ be a polynomial with complex coefficients given by
\begin{equation*}
	f(z) = a\cdot\prod_{n=1}^N(z-\alpha_n).
\end{equation*}
Recall that the {\it (logarithmic) Mahler measure} $M$ of $f$ is defined by
\begin{equation*}
	M(f) = \log|a|+\sum_{n=1}^N\log^+|\alpha_n|.
\end{equation*}
If $\alpha$ is a non-zero algebraic number, the {\it (logarithmic) Mahler measure $M(\alpha)$ of $\alpha$} is defined as the Mahler measure of the minimal 
polynomial of $\alpha$ over $\intg$.

It is a consequence of a theorem of Kronecker that $M(\alpha) = 0$ if and only if $\alpha$ is a root of unity.  In a famous 1933 paper, D.H. Lehmer \cite{Lehmer} 
asked whether there exists a constant $c>0$ such that $M(\alpha) \geq c$ in all other cases.  He could find no algebraic number with Mahler measure smaller than that of
\begin{equation*}
  \ell(x) = x^{10}+x^9-x^7-x^6-x^5-x^4-x^3+x+1,
\end{equation*}
which is approximately $0.16\ldots$.  Although the best known general lower bound is
\begin{equation*}
	M(\alpha) \gg \left(\frac{\log\log\deg\alpha}{\log\deg\alpha}\right)^3,
\end{equation*}
due to Dobrowolski \cite{Dobrowolski},
uniform lower bounds have been established in many special cases (see \cite{BDM, Schinzel, Smyth}, for instance).  Furthermore, numerical evidence
provided, for example, in \cite{Moss, MossWeb, MPV, MRW} suggests there exists such a constant $c$.  

\begin{conj}[Lehmer's conjecture] \label{LehmerConjecture}
	There exists a real number $c > 0$ such that if $\alpha\in\algt$ is not a root of unity then $M(\alpha) \geq c$.
\end{conj}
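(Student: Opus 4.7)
Any plan here must confront the fact that Conjecture~\ref{LehmerConjecture} is Lehmer's conjecture, unresolved since 1933, so the best I can realistically offer is a strategy built on the known partial results rather than a complete attack. With that caveat, my plan would proceed in several stages.

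First I would reduce to the truly hard case. By Smyth's theorem, every non-reciprocal algebraic integer $\alpha$ satisfies $M(\alpha) \geq \log\theta_0 \approx 0.2811$, where $\theta_0$ is the smallest Pisot number. Northcott's finiteness theorem disposes of all $\alpha$ of any fixed bounded degree. Together these reduce the problem to bounding $M(\alpha)$ below by an absolute constant when $\alpha$ is a reciprocal algebraic integer of arbitrarily large degree, and the whole difficulty is concentrated in this case.

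For this reduced problem I would try to sharpen the auxiliary-polynomial method of Dobrowolski. The mechanism is that for each prime $p$, the resultant of the minimal polynomial $f$ of $\alpha$ with $f(x^p)$ is divisible by a large power of $p$ unless $\alpha$ is a root of unity, while the same resultant is bounded above essentially by $M(\alpha)^p$ times a degree factor. Summing this divisibility constraint over primes $p \leq P$ and optimizing $P$ recovers the cited $(\log\log d/\log d)^3$ bound. To improve this to a uniform constant, one would need either new arithmetic input that exploits cancellation across many primes simultaneously, or a geometric argument that uses the joint distribution of all the Galois conjugates on the circle $|z|=1$ rather than handling them one at a time.

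An alternative route, more in the spirit of this paper, is to attack the equivalent reformulation in terms of the metric Mahler measure: Dubickas and Smyth showed that Lehmer's conjecture is equivalent to the existence of $c>0$ with $M_1(\alpha) \geq c$ for every non-torsion $\alpha \in \algt$. One would then study the extremal factorizations $\alpha = \alpha_1 \cdots \alpha_N$, aiming to prove that if every $M(\alpha_i)$ were very small, then the combined Galois and height structure of the $\alpha_i$ would force $\alpha$ itself to be a root of unity. \emph{The main obstacle on either route is the same}: no known technique produces a lower bound on $M(\alpha)$ that is independent of $\deg\alpha$ for reciprocal integer polynomials, and breaking past that barrier is precisely what keeps Lehmer's problem open.
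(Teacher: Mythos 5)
There is a genuine gap here, and it is the whole problem: the statement you were asked to prove is Lehmer's conjecture, which the paper states as Conjecture \ref{LehmerConjecture} precisely because it is open --- the paper contains no proof of it, and none exists in the literature. Your proposal is candid about this, and your survey of the partial results is essentially accurate: Smyth's theorem gives $M(\alpha) \geq \log\theta_0$ for non-reciprocal algebraic integers (and the trivial bound $M(\alpha)\geq\log 2$ handles non-integers, a case you glossed over); Dobrowolski's resultant argument with $f(x)$ and $f(x^p)$ yields the $(\log\log d/\log d)^3$ bound cited in the paper; and the equivalence with a uniform lower bound for $M_1$ on non-torsion classes is the Dubickas--Smyth reformulation that motivates the paper's metric framework. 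But none of your three stages closes the remaining case of reciprocal algebraic integers of unbounded degree. The reduction via Northcott deserves one caution: for each fixed degree $d$, Northcott gives only a constant $c_d > 0$, and nothing prevents $c_d \to 0$ as $d \to \infty$ (indeed Dobrowolski's bound does tend to $0$), so ``disposing of bounded degree'' buys no uniform constant --- the entire content of the conjecture lives at large degree, exactly where your plan offers only the aspiration of ``new arithmetic input'' or ``a geometric argument'' without specifying either.

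To be clear about the comparison you were asked for: the paper's own treatment of this statement is simply to record it as a conjecture and to prove theorems (on the attainment of the infimum in $M_t(\alpha)$) that are \emph{conditional on or motivated by} it, not to prove it. So your proposal is not wrong in the sense of containing a false step; it is incomplete in the sense that its decisive steps are placeholders. If this were submitted as a proof, it would have to be rejected; as a research plan it is a reasonable summary of the state of the art, though the metric-Mahler-measure route you sketch in the third paragraph faces the same obstruction you name at the end --- any bound extracted from an extremal factorization $\alpha = \alpha_1\cdots\alpha_N$ ultimately needs a degree-independent lower bound on some $M(\alpha_i)$, which is the original problem again.
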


For an algebraic number $\alpha$, Dubickas and Smyth \cite{DubSmyth2} introduced the {\it metric Mahler measure} $M_1(\alpha)$ by
\begin{equation} \label{DubSmythMetric}
  M_1(\alpha) = \inf\left\{\sum_{n=1}^NM(\alpha_n): N\in\nat,\ \alpha_n\in\algt,\ \alpha=\prod_{n=1}^N\alpha_n\right\}.
\end{equation}
Here, the infimum is taken over all ways to write $\alpha$ as a product of algebraic numbers.  The advantage of $M_1$ over $M$ is that it satisfies the 
triangle inequality
\begin{equation*}
  M_1(\alpha\beta) \leq M_1(\alpha) + M_1(\beta)
\end{equation*}
for all algebraic numbers $\alpha$ and $\beta$.  In view of this observation, $M_1$ is well-defined on the quotient group $G = \algt/\tor(\algt)$,
and the map $(\alpha,\beta) \mapsto M_1(\alpha\beta^{-1})$ defines a metric on $G$.  This metric induces the discrete topology if and only if Lehmer's conjecture is true. 

The metric Mahler measure $M_1$ is only a special case of the {\it $t$-metric Mahler measures}, which are defined for $t \geq 1$ by
\begin{equation*}
  M_t(\alpha) = \inf\left\{\left(\sum_{n=1}^NM(\alpha_n)^t\right)^{1/t}:N\in\nat,\ \alpha_n\in\algt,\  \alpha=\prod_{n=1}^N\alpha_n\right\}.
\end{equation*}
In addition, the {\it $\infty$-metric Mahler measure} of $\alpha$ is defined by
\begin{equation*}
  M_\infty(\alpha) = \inf\left\{\max_{1\leq n\leq N}\{M(\alpha_n)\}: N\in\nat,\ \alpha_n\in\algt,\ \alpha = \prod_{n=1}^N\alpha_n\right\}.
\end{equation*}
The $t$-metric Mahler measures were introduced and studied in \cite{SamuelstMetric1, SamuelstMetric2}. 
It follows from the results of \cite{SamuelstMetric1} that these functions have analogues of the triangle inequality
\begin{equation*}
  M_t(\alpha\beta)^t \leq M_t(\alpha)^t + M_t(\beta)^t\quad\mathrm{and}\quad M_\infty(\alpha\beta)\leq\max\{M_\infty(\alpha),M_\infty(\beta)\}
\end{equation*}
Hence, the map $(\alpha,\beta) \mapsto M_t(\alpha\beta^{-1})$ defines a metric on $G$ that induces the discrete topology if and only if Lehmer's conjecture is true.

If $t\in [1,\infty]$ and $\alpha\in\alg$, we say that {\it the infimum in $M_t(\alpha)$ is attained by $\alpha_1,\ldots,\alpha_N$} if we have that
\begin{equation*}
	\alpha = \alpha_1\cdots\alpha_N\quad\mathrm{and}\quad 
		M_t(\alpha) =  \begin{cases} \left(\sum_{n=1}^N M(\alpha_n)^t\right)^{1/t} & \mathrm{if}\ t<\infty \\
								\max_{1\leq n\leq N} \{M(\alpha_n)\} & \mathrm{if}\ t = \infty. \end{cases}
\end{equation*}
If $S$ is any subset of $\alg$, we say {\it the infimum in $M_t(\alpha)$ is attained in $S$} if there exist points $\alpha_1,\ldots,\alpha_N\in S$ that attain the infimum in $M_t(\alpha)$. 

It is not immediately obvious that $M_t(\al)$ is attained for all values of $\alpha$ and $t$.  Dubickas and Smyth \cite{DubSmyth2} conjectured that the infimum in 
$M_1(\alpha)$ is always attained a fact later proved by the second author \cite{SamuelsInfimum}. More specifically, if $K_\alpha$ is the Galois closure of $\rat(\alpha)$ over $\rat$ and
\begin{equation*}
  \rad(K_\alpha) = \{\gamma\in\alg:\gamma^n\in K_\alpha\mbox{ for some } n\in\nat\},
\end{equation*}
then the infimum in $M_1(\alpha)$ is attained in $\rad(K_\alpha)$. Using the same method, this result was generalized for all $t$-metric Mahler measures in \cite{SamuelstMetric1}. 
That is, for every $t \geq 1$, the infimum in $M_t(\alpha)$ is attained in $\rad(K_\alpha)$.

It is natural to ask if these results can be improved, having a smaller set $S$ in place of $\rad(K_\alpha)$.  In particular, for each $\alpha\in\alg$, we would like to identify a set $S_\alpha$
whose points generate a finite extension of $\rat$ and the infimum in $M_t(\alpha)$ is attained in $S_\alpha$ for all $t$.  This problem is of considerable importance if we 
hope to compute exact values of $M_t(\alpha)$.  
For example, Conjecture 2.1 of \cite{SamuelstMetric2} predicts that, if $\alpha$ is rational, then the infimum in $M_t(\alpha)$ is attained in $\rat$.  With this assumption, it is
possible to graph some examples of the function $t\mapsto M_t(\alpha)$ where $\alpha\in \rat$.

It follows from \cite{DubSmyth2} and \cite{FiliSamuels} that Conjecture 2.1 of \cite{SamuelstMetric1} holds for $t = 1$ and $t=\infty$.  Unfortunately, these methods
seem genuinely distinct and cannot be easily generalized to handle all values of $t$ and $\al$.   As our first result, we prove this conjecture for all $t \geq 1$.

\begin{thm} \label{RationalAttained}
  If $\alpha$ is a non-zero rational number and $t\in[1,\infty]$ then the infimum in $M_t(\alpha)$ is attained in $\rat$.
\end{thm}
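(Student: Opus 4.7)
My starting point is the main result of \cite{SamuelstMetric1}, which guarantees that the infimum in $M_t(\alpha)$ is attained in $\rad(K_\alpha)$ for every $\alpha\in\alg$ and every $t\in[1,\infty]$. Since $\alpha\in\rat$ gives $K_\alpha=\rat$, the set $\rad(K_\alpha)=\rad(\rat)$ consists precisely of all surds of rationals, and I can fix $\alpha_1,\ldots,\alpha_N\in\rad(\rat)$ with $\alpha=\alpha_1\cdots\alpha_N$ attaining $M_t(\alpha)$. The task becomes to replace this surd factorization by a rational factorization without increasing the $L_t$-norm of Mahler measures.

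The first reduction is to assume every $\alpha_n$ is a real number. Each surd has the form $\alpha_n=\zeta_n|\alpha_n|$ with $\zeta_n$ a root of unity and $|\alpha_n|\in\real_{>0}$ itself a surd (since $|\alpha_n|^{k_n}=|q_n|\in\rat$ when $\alpha_n^{k_n}=q_n$). The product $\prod\zeta_n$ must be a real root of unity, hence $\pm 1$, a sign which I absorb into one factor. The inequality $M(|\alpha_n|)\leq M(\alpha_n)$ then follows from $h(|\alpha_n|)=h(\alpha_n)$ together with $\deg|\alpha_n|\leq\deg\alpha_n$; the latter uses that the positive real root of $x^{k_n}-|q_n|$ always lies in an irreducible factor of minimum degree. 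After this reduction $\alpha=\prod\beta_n$ with each $\beta_n=\pm|\alpha_n|$ a real surd satisfying $\beta_n^{k_n}=q_n\in\rat$, and the $L_t$-norm of Mahler measures has not grown.

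The decisive step is the sharp lower bound
\begin{equation*}
  \sum_n M(\beta_n)^t \;\geq\; \sum_p |v_p(\alpha)|\,(\log p)^t,
\end{equation*}
valid for $1\leq t<\infty$, with the analogous $\max$-formulation when $t=\infty$; here $v_p$ is extended to real surds by $v_p(\beta_n):=v_p(q_n)/k_n$. Once this is established, it is matched by the explicit rational factorization that realizes $p^{v_p(\alpha)}$ by $|v_p(\alpha)|$ copies of $p^{\pm 1}$ for each prime $p$ dividing the numerator or denominator of $\alpha$, and the two bounds together both pin down $M_t(\alpha)$ and exhibit an attaining rational factorization.

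The principal obstacle lies in proving this lower bound. The elementary estimate $M(\beta_n)\geq h(\beta_n)\geq |v_p(\beta_n)|\log p$ is too weak to aggregate across primes: for a mixed surd such as $\beta_n=\sqrt{2/3}$ one has $M(\beta_n)=\log 3$ but the per-prime bound yields only $(\log p)/2$ for $p\in\{2,3\}$, losing a factor in a naive sum. The stronger inputs I would exploit are, first, the identity $M(\beta_n)=d_n\max(N_n,D_n)$ with $N_n$ and $D_n$ the weighted sums of positive and negative valuations, and second, the algebraic fact that $d_n|v_p(\beta_n)|=|v_p(\beta_n^{d_n})|$ is always a non-negative integer, so whenever $v_p(\beta_n)\neq 0$ the contribution $d_n|v_p(\beta_n)|\log p$ is at least $\log p$. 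A combinatorial rearrangement of these integer contributions -- matching each factor $\beta_n$ to the primes it serves and summing via a convexity argument on the $L_t$-norm -- should then deliver the bound. This is precisely the step where the general-$t$ case demands a genuinely new argument, since neither the triangle inequality (tight for $t=1$) nor the ultrametric-style inequality (tight for $t=\infty$) suffices on its own at intermediate $t$.
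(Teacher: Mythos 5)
Your reductions at the start are sound: attainment by surds in $\rad(\rat)$ follows from \cite{SamuelstMetric1}, and the passage to real surds works ($\deg|\alpha_n|\leq\deg\alpha_n$ holds because all conjugates of a surd share the same modulus, so $|\alpha_n|^{\deg\alpha_n}$ equals the modulus of a rational norm, forcing $\deg|\alpha_n|\mid\deg\alpha_n$). The fatal problem is your ``decisive step.'' The claimed sharp lower bound
\begin{equation*}
  \sum_n M(\beta_n)^t \;\geq\; \sum_p |v_p(\alpha)|\,(\log p)^t
\end{equation*}
is \emph{false} whenever $\alpha$ has both a nontrivial numerator and a nontrivial denominator. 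Take $\alpha=2/3$ and the one-term factorization $\beta_1=2/3$: the minimal polynomial over $\intg$ is $3x-2$, so $M(2/3)=\log 3$, and the left side is $(\log 3)^t$ while the right side is $(\log 2)^t+(\log 3)^t$. For the same reason your proposed attaining factorization (realizing each prime by $|v_p(\alpha)|$ copies of $p^{\pm 1}$) is not optimal: $M_t(2/3)\leq \log 3 < \left((\log 2)^t+(\log 3)^t\right)^{1/t}$ for every finite $t\geq 1$. The source of the error is that for coprime integers $r,s$ one has $M(r/s)=\log\max\{|r|,|s|\}$, so a single rational factor can serve a numerator prime and a denominator prime simultaneously at the cost of only the \emph{larger} logarithm; any prime-by-prime expression of the form $\sum_p|v_p(\alpha)|(\log p)^t$ overcounts by ignoring this pairing. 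Consequently the combinatorial rearrangement you defer to the end --- which you yourself flag as the genuinely new ingredient --- cannot exist as stated, because it would prove a false inequality. (A closed formula of this shape is correct in the paper only for squarefree $D>0$, i.e.\ when no cancellation between numerator and denominator is possible.)

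The paper avoids this trap by never computing $M_t(\alpha)$ at all: instead of a universal lower bound, it dominates the given optimal surd factorization \emph{termwise} by a rational one. By Lemma \ref{NormConversion}, each surd factor satisfies $M(\alpha_n)=M\left(\norm_{\rat(\alpha_n)/\rat}(\alpha_n)\right)=\log\max\{r_n,s_n\}$, where $r_n/s_n$ is the norm in lowest terms; taking norms of $\alpha=\alpha_1\cdots\alpha_N$ and stripping the common exponent $[K:\rat]$ gives $m\mid\prod_n r_n$ and $m'\mid\prod_n s_n$ for $\alpha=m/m'$ in lowest terms; then the recursive gcd construction of Lemma \ref{NewSequence} writes $m=\prod_n m_n$ and $m'=\prod_n m'_n$ with $m_n\mid r_n$ and $m'_n\mid s_n$, whence $M(m_n/m'_n)\leq\log\max\{m_n,m'_n\}\leq\log\max\{r_n,s_n\}=M(\alpha_n)$ for every $n$. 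Termwise domination yields $\sum_n M(m_n/m'_n)^t\leq\sum_n M(\alpha_n)^t$ for all $t$ simultaneously, with no convexity or matching argument, and it automatically preserves whatever numerator--denominator pairings the optimal surd factorization exploits. If you want to salvage your outline, this is the repair: abandon the closed-form target and instead convert the surd factorization into a rational factorization factor by factor, using the norm identity for surds plus the gcd bookkeeping, so that no individual Mahler measure increases.
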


Our next question is whether Theorem \ref{RationalAttained} can be extended to arbitrary algebraic numbers $\al$.
In view of Theorem \ref{RationalAttained}, one might suspect that the infimum in $M_t(\alpha)$ is always attained in $K_\alpha$.
This turns out to be false, however, as we are able to produce an infinite family of quadratic counterexamples.
More specifically, if $D$ is a square-free positive integer, we show precisely when $M_t(\sqrt D)$ is attained in $K_{\sqrt D} = \rat(\sqrt D)$.

\begin{thm} \label{QuadraticNotAttained}
	Suppose that $p_1,\ldots, p_L$ are distinct primes written in decreasing order, $D = p_1\cdots p_L$, and $t\in (1,\infty]$.
	The infimum in $M_t(\sqrt D)$ is attained in $\rat(\sqrt D)$ if and only if $D < p_1^2$.  In this situation, the infimum is attained by points
	\begin{equation*}
		\sqrt{\frac{p_1}{p_2\cdots p_L}}, p_2,\ldots, p_L \in\rat(\sqrt D),
	\end{equation*}
	and we have that
	\begin{equation*}
		M_t(\sqrt D) = \begin{cases} \left(\sum_{\ell = 1}^L (\log p_\ell)^t\right)^{1/t} & \mathrm{if\ } t\in (1,\infty) \\
					\log p_1 & \mathrm{if\ } t = \infty. \end{cases}
	\end{equation*}
\end{thm}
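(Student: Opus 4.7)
The proof splits into three parts: the ``if'' direction computes the value of $M_t(\sqrt{D})$ when $D<p_1^2$ by exhibiting the given factorization, and the ``only if'' direction verifies non-attainment in $\rat(\sqrt{D})$ when $D\geq p_1^2$, with a global lower bound argument mediating between them.

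For the upper bound I would first verify that the proposed factor $\sqrt{p_1/(p_2\cdots p_L)} = \sqrt{D}/(p_2\cdots p_L)$ lies in $\rat(\sqrt{D})$. Its minimal polynomial over $\intg$ is $(p_2\cdots p_L)x^2 - p_1$, which is primitive because $\gcd(p_1, p_2\cdots p_L) = 1$. Hence $M(\sqrt{p_1/(p_2\cdots p_L)}) = \log(p_2\cdots p_L) + 2\log^{+}\sqrt{p_1/(p_2\cdots p_L)}$, and the hypothesis $D<p_1^2$ is exactly the condition $p_1/(p_2\cdots p_L) > 1$ that makes this simplify to $\log p_1$. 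Combined with $M(p_\ell) = \log p_\ell$ for $\ell\geq 2$, summing $t$-th powers gives $\sum_{\ell=1}^L (\log p_\ell)^t$, showing $M_t(\sqrt{D}) \leq (\sum_\ell (\log p_\ell)^t)^{1/t}$ (and $\log p_1$ for $t=\infty$).

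For the matching global lower bound I would show $\sum_n M(\alpha_n)^t \geq \sum_\ell (\log p_\ell)^t$ for every factorization $\sqrt{D} = \prod_n \alpha_n$ in $\algt$. After extending each $p_\ell$-adic valuation to $\algt$, multiplicativity yields $\sum_n v_{p_\ell}(\alpha_n) = v_{p_\ell}(\sqrt{D}) = 1/2$, so some factor per $\ell$ must have non-integer $v_{p_\ell}$. The local-degree inequality $M(\alpha) \geq [\rat(\alpha)_v : \rat_{p_\ell}] \log^{+}|\alpha|_v$ at a place $v$ above $p_\ell$, combined with the product formula, forces $M(\alpha) \geq \log p_\ell$ for any such factor; aggregating via a matching argument on the index sets $E(\alpha_n) = \{\ell : v_{p_\ell}(\alpha_n) \notin \intg\}$ and using the refined bound $M(\alpha_n) \geq \max(\sum_{\ell \in E^-} \log p_\ell,\ \sum_{\ell\in E^+}\log p_\ell)$ in the multiply-charged case produces the desired $L^t$ estimate. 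Together with the upper bound, this pins down $M_t(\sqrt{D}) = (\sum(\log p_\ell)^t)^{1/t}$ and shows it is attained in $\rat(\sqrt{D})$ by the factorization of the previous paragraph whenever $D < p_1^2$.

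For the ``only if'' direction I would show, assuming $D \geq p_1^2$, that every $\rat(\sqrt{D})$-factorization has $\sum_n M(\alpha_n)^t > \sum_\ell (\log p_\ell)^t$ strictly. Equality in the lower bound would force each non-rational factor $\alpha_n \in \rat(\sqrt{D})\setminus\rat$ to realize $\log p_\ell$ with non-integer valuation at exactly one ramified prime. An explicit computation (analogous to the first paragraph) shows the minimum Mahler measure of elements of $\rat(\sqrt{D})$ with prescribed parity support $E\subseteq\{1,\ldots,L\}$ is $\log\max(\prod_{\ell\in E} p_\ell,\ \prod_{\ell\notin E} p_\ell)$, achieved by $(\prod_{\ell\notin E} p_\ell)/\sqrt{D}$. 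The hypothesis $D\geq p_1^2$ makes this lower bound strictly exceed $\log p_1$ for every nonempty $E$, so no parity distribution of factors can achieve the target $L^t$ value. The main obstacle is this last step, which requires both rigorously justifying the parity-support lower bound on $M$ and performing the subset-cover enumeration; this is presumably where the companion formula for $M_t(D^{1/k})$ advertised in the abstract enters.
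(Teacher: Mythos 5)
Your upper-bound paragraph and the computation of the value $M_t(\sqrt D)$ are essentially sound (the valuation-theoretic sketch of the global lower bound is vaguer than the paper's norm argument, but it could simply be replaced by citing Theorem \ref{QuadraticValue}, whose proof via Lemma \ref{TameInfimum} also supplies the rigidity statement you need: any attaining factors satisfy $M(\alpha_n)\le\log p_1$). The genuine gap is in your ``only if'' direction, which rests on a false lemma. Your claim that the minimum Mahler measure of $\alpha\in\rat(\sqrt D)$ with parity support $E$ is $\log\max\bigl(\prod_{\ell\in E}p_\ell,\ \prod_{\ell\notin E}p_\ell\bigr)$ fails already for $D=21$: the point $\alpha=(3-\sqrt{21})/2$ has minimal polynomial $x^2-3x-3$ and $\norm_{\rat(\sqrt{21})/\rat}(\alpha)=-3$, so its $3$-adic valuation is $1/2\notin\intg$ while its $7$-adic valuation is $0$, giving support $E=\{3\}$; yet $M(\alpha)=\log\bigl((3+\sqrt{21})/2\bigr)\approx 1.33<\log 7=\log\max(3,7)$. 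This is exactly the point the paper exhibits in its discussion of $M_\infty(\sqrt{21})$. Your formula computes the minimum over \emph{surds} with the given support, and genuine quadratic irrationals with a nonzero middle coefficient slip under it; indeed your proposed extremizer $(\prod_{\ell\notin E}p_\ell)/\sqrt D=\sqrt{Q/P}$ has half-integer valuation at \emph{every} ramified prime, i.e.\ support $\{1,\ldots,L\}$ rather than $E$, an internal inconsistency signalling the same restriction to surds.

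What the step actually requires is a structural analysis of arbitrary quadratic elements of $\rat(\sqrt D)$ of small measure, which is how the paper proceeds: attainment rigidity gives $M(\alpha_n)\le\log p_1$ for all $n$ (Lemma \ref{TameInfimum} when $1<t<\infty$; immediate from the value when $t=\infty$); the parity of $\nu_{p_1}$ in $-D=\prod_n\norm_{\rat(\sqrt D)/\rat}(\alpha_n)$, with $\nu_{p_1}(D)=1$ odd and rational factors contributing even valuations, produces a quadratic factor $\alpha_n$ with $p_1$ dividing the numerator of its norm (passing to $\alpha_n^{-1}$ if necessary); Lemma \ref{small} then shows such an $\alpha_n$ must be stable with minimal polynomial $ax^2\pm p_1$ or $ax^2\pm p_1x+p_1$, where $0<a<p_1$; and the constraint that the discriminant equal $Dv^2$ forces $D<p_1^2$ in both cases (if $\Delta=\pm 4ap_1$ then $p_2\cdots p_L\mid a<p_1$; if $\Delta=p_1(p_1-4a)$ then $D\le\Delta<p_1^2$). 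The case $ax^2\pm p_1x+p_1$ --- a stable quadratic whose measure equals $\log p_1$ despite a nonzero middle coefficient --- is precisely the configuration your support-based enumeration cannot see, and no subset-cover argument will close the gap without this minimal-polynomial and discriminant analysis.
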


Theorem \ref{QuadraticNotAttained} enables the construction of infinitely many integers $D$ such that $M_t(\sqrt D)$
is not attained in $K_{\sqrt D} = \rat(\sqrt D)$ for any $t>1$.  Theorem \ref{QuadraticValue} below gives a set of points that attain the infimum in $M_t(\al)$ for algebraic 
numbers $\al = D^{1/k}$, where $D > 0$ is a square-free integer.

\begin{thm} \label{QuadraticValue}
	If $p_1,\ldots, p_L$ are distinct primes, $D = p_1\cdots p_L$, and $t\in [1,\infty]$, then
	the infimum in $M_t(D^{1/k})$ is attained by $p_1^{1/k},\ldots,p_L^{1/k}$ and
	\begin{equation*}
		M_t(D^{1/k}) = \begin{cases} \left(\sum_{\ell = 1}^L (\log p_\ell)^t\right)^{1/t} & \mathrm{if\ } t\in [1,\infty) \\
					\max_{1\leq \ell\leq L}\{\log p_\ell\} & \mathrm{if\ } t = \infty. \end{cases}
	\end{equation*}
\end{thm}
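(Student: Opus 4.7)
The plan is to establish matching upper and lower bounds.

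The upper bound is immediate from the explicit factorization $D^{1/k} = p_1^{1/k} \cdots p_L^{1/k}$. By Eisenstein's criterion at $p_\ell$, the polynomial $x^k - p_\ell \in \intg[x]$ is irreducible and hence the minimal polynomial of $p_\ell^{1/k}$; its $k$ roots all have absolute value $p_\ell^{1/k} > 1$, so $M(p_\ell^{1/k}) = \log p_\ell$. Plugging this factorization into the defining infimum yields $M_t(D^{1/k}) \leq (\sum_{\ell=1}^L (\log p_\ell)^t)^{1/t}$ for finite $t$, with the analogous $\max$ formula for $t = \infty$.

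For the lower bound, let $D^{1/k} = \beta_1 \cdots \beta_N$ be an arbitrary factorization, and let $K$ be a number field containing all $\beta_n$. The strategy is a place-by-place argument. For each $\ell \in \{1, \ldots, L\}$, fix a place $w(\ell)$ of $K$ above $p_\ell$. Since $x^k - D$ is Eisenstein at $p_\ell$, the prime $p_\ell$ is totally ramified in $\rat(D^{1/k}) \subseteq K$, so $v_{w(\ell)}(D^{1/k}) > 0$. The identity $\sum_n v_{w(\ell)}(\beta_n) = v_{w(\ell)}(D^{1/k}) > 0$ forces the existence of an index $n(\ell)$ with $v_{w(\ell)}(\beta_{n(\ell)}) > 0$. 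Letting $T_n := \{\ell : n(\ell) = n\}$ produces a partition of $\{1, \ldots, L\}$.

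For each $n$ with $T_n \neq \emptyset$ and each $\ell \in T_n$, restricting $w(\ell)$ to $\rat(\beta_n)$ yields a place $w_\ell$ above $p_\ell$ with $v_{w_\ell}(\beta_n) > 0$; as $\ell$ varies over $T_n$ these restricted places are distinct, since they lie above distinct rational primes. Using the place-by-place decomposition of the Mahler measure together with the identity $M(\beta_n) = M(\beta_n^{-1})$, each $w_\ell$ contributes at least $\log p_\ell$ to $M(\beta_n)$, and these contributions add:
\[
M(\beta_n) \;\geq\; \sum_{\ell \in T_n} \log p_\ell.
\]
For $t \in [1, \infty)$, the elementary inequality $(\sum a_i)^t \geq \sum a_i^t$ for nonnegative $a_i$ yields $M(\beta_n)^t \geq \sum_{\ell \in T_n} (\log p_\ell)^t$, and summing over $n$ using the partition property gives $\sum_n M(\beta_n)^t \geq \sum_{\ell=1}^L (\log p_\ell)^t$. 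The case $t = \infty$ follows analogously from $\max_n M(\beta_n) \geq \max_\ell \log p_\ell$.

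The main subtlety I expect is the bookkeeping in the partition step: a single $\beta_n$ may be responsible for multiple primes $p_\ell$, but the additivity of single-place contributions to $M(\beta_n)$ ensures its Mahler measure is sufficiently large to compensate, so the partition-to-sum passage goes through cleanly.
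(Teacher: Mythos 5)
Your proof is correct, and it shares the paper's overall architecture: the same upper-bound factorization $D^{1/k}=p_1^{1/k}\cdots p_L^{1/k}$ with $M(p_\ell^{1/k})=\log p_\ell$ via Eisenstein, then a lower bound that assigns each prime $p_\ell$ to some factor of an arbitrary factorization and invokes the superadditivity $\left(\sum_i a_i\right)^t \geq \sum_i a_i^t$. Where you genuinely diverge is in the mechanism of that assignment and the resulting Mahler measure bound. The paper (Lemma \ref{TameInfimum}) works globally: it applies $\norm_{K/\rat}$ to the factorization to obtain $D^{[K:\rat(D^{1/k})]} = \prod_n (r_n/s_n)^{[K:\rat(\alpha_n)]}$, concludes from divisibility that every $p_\ell$ divides some numerator $r_n$, and proves $M(\alpha_n) \geq \log r_n$ by inspecting the constant coefficient of the minimal polynomial over $\intg$. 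You work locally: a place $w(\ell)$ of $K$ above each $p_\ell$, positivity of $v_{w(\ell)}(D^{1/k})$, and the adelic decomposition of $M(\beta_n)=\deg(\beta_n)\,h(\beta_n)$ combined with $h(\beta_n)=h(\beta_n^{-1})$, so that the distinct restricted places contribute additively at least $\sum_{\ell\in T_n}\log p_\ell$. Your version is sound: the contribution of a finite place $v$ of $\rat(\beta_n)$ with $\mathrm{ord}_v(\beta_n)>0$ is $f_v\,\mathrm{ord}_v(\beta_n)\log p_v \geq \log p_\ell$, and restricting $w(\ell)$ to $\rat(\beta_n)$ rather than staying in $K$ correctly avoids diluting contributions by $\deg(\beta_n)/[K:\rat]$; also, your appeal to total ramification is unnecessary, since $k\,v_{w(\ell)}(D^{1/k}) = v_{w(\ell)}(D) > 0$ already gives what you need. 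What each approach buys: yours is cleaner and conceptually transparent but uses the adelic height formula, while the paper's stays entirely elementary (norms, gcd-type divisibility, and coefficients of minimal polynomials) and, by tracking exactly when its inequalities are strict for $t>1$, extracts the finer structural conclusions of Lemma \ref{TameInfimum} (the multiset of Mahler measures of any optimal factorization), which are essential input for Theorem \ref{QuadraticNotAttained}; your argument could be refined to recover this, but as written it proves only the value and attainment.
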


As an example, for $D=30 = 2\cdot 3\cdot 5$, Theorem \ref{QuadraticValue} asserts that $M_t(\sqrt{30})$ is attained by $\sqrt 2, \sqrt 3, \sqrt 5$, and
\begin{equation*}
	M_t(\sqrt{30})^t = (\log 5)^t + (\log 3)^t + (\log 2)^t.
\end{equation*}
While it is obvious that $\sqrt 2,\sqrt 3,\sqrt 5\not\in \rat(\sqrt {30})$, the infimum in $M_t(\sqrt {30})$ might be attained by some distinct set of points
in $\rat(\sqrt{30})$.  Theorem \ref{QuadraticNotAttained} excludes any such possibilities.

If we take $D = 42 = 2\cdot 3\cdot 7$, Theorem \ref{QuadraticValue} establishes that $M_t(\sqrt{42})$ is attained by $\sqrt 2,\sqrt 3,\sqrt 7$, and
\begin{equation*}
	M_t(\sqrt{42})^t = (\log 7)^t + (\log 3)^t + (\log 2)^t.
\end{equation*}
Nonetheless, Theorem \ref{QuadraticNotAttained} identifies the slightly more subtle points $\sqrt{7/6},3,2\in\rat(\sqrt{42})$ that also attain the infimum in $M_t(\sqrt{42})$. 
In this example, we note that the infimum is not attained by a unique set.

At first glance, one might think that the infimum in $M_t(\sqrt{D})$ can be attained only by rational numbers and their square roots. 
This intuition is misleading, however, as we see in the following example.   Let $t = \infty$ and take $D=21=7\cdot 3$. 
We know from Theorem \ref{QuadraticNotAttained} that the infimum in $M_t(\sqrt{21})$ is attained by the points $\sqrt{7/3}, 3 \in \rat(\sqrt{21})$ and
\begin{equation*}
  M_t(\sqrt{21})^t = (\log 7)^t + (\log 3)^t.
\end{equation*}
Now consider
\begin{equation} \label{BetterInf}
  \sqrt{21} = (-1) \cdot \left(\frac{7+\sqrt{21}}{2}\right) \cdot \left(\frac{3-\sqrt{21}}{2}\right),
\end{equation}
and we verify easily that
\begin{equation*}
  M\left(\frac{7+\sqrt{21}}{2}\right) = \log 7\quad\mathrm{and}\quad M\left(\frac{3-\sqrt{21}}{2}\right) < \log 7.
\end{equation*}
In other words, $M_\infty(\sqrt{21})$ is attained by the points on the right hand side of \eqref{BetterInf} and these points belong to $\Q(\sqrt{21})$.  It is important to note that $M\left((3-\sqrt{21})/2\right) > \log 3$, so these points cannot be used to attain the infimum in $M_t(\sqrt{21})$ for other values of $t$. Nonetheless, this example illustrates that the infimum in $M_t(\sqrt D)$ may be attained by using distinct non-trivial sets of points contained in $\Q(\sqrt{D})$.

We would like to conclude with  the following question.

\begin{question}
  Is the infimum in $M_t(\al)$ always attained by points $\al_1, \dots, \al_N$ such that $[\rat(\al_n) :\rat] \leq [\rat(\al) : \rat]$ for all $n$?
\end{question}

According to Theorem \ref{QuadraticValue}, the answer is 'yes' when $\alpha$ is a surd, although we know of little other evidence.


\section{The rational case}

Recall that the {\it (logarithmic) Weil height} of an algebraic number $\alpha$ is given by
\begin{equation*}
  h(\alpha) = \frac{M(\alpha)}{\deg\alpha}.
\end{equation*}
It is well-known that if $\zeta$ is a root of unity, then $h(\alpha) = h(\zeta\alpha)$ so that $h$ is well-defined on our quotient group $G$.
Furthermore, if $n$ is an integer, then we have that $h(\alpha^n) = |n|\cdot h(\alpha)$.  Also recall that a surd is an algebraic number $\alpha$ such that $\alpha^n\in\rat$
for some positive integer $n$.

Suppose now that $F$ is any number field containing the algebraic number $\alpha$.  Further assume that $K$ is an extension of $F$ which is Galois over $\rat$.  We set
\begin{equation*}
  G = \gal(K/\rat)\quad\mathrm{and}\quad H = \gal(K/F),
\end{equation*}
and let $S$ be a set of left coset representatives of $H$ in $G$.  Recall that the {\it norm of $\alpha$ from $F$ to $\rat$} is given by
\begin{equation*}
  \norm_{F/\rat}(\alpha) = \prod_{\sigma\in S}\sigma(\alpha).
\end{equation*}
It follows from standard Galois Theory that $\norm_{F/\rat}$ is a homomorphism from $F$ to $\rat$ which does not depend on the choice of $K$ or $S$. 
In addition, if $E$ is any extension of $F$, then it is easily verified that
\begin{equation} \label{NormDescend}
  \norm_{E/\rat}(\alpha) = \norm_{F/\rat}(\alpha)^{[E:F]}.
\end{equation}
We begin our proof of Theorem \ref{RationalAttained} with a lemma that relates the Mahler measure of a surd to the Mahler measure
of its norm.

\begin{lem} \label{NormConversion}
  If $\gamma$ is a surd then $M(\gamma) = M\left(\norm_{\rat(\gamma)/\rat}(\gamma)\right)$.
\end{lem}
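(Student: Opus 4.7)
The plan is to exploit the fact that the conjugates of a surd differ only by roots of unity, which makes the norm agree with a power of $\gamma$ up to torsion.

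First I would fix notation: let $d=[\rat(\gamma):\rat]$ and write $\gamma_1,\ldots,\gamma_d$ for the Galois conjugates of $\gamma$. Since $\gamma$ is a surd, there is some positive integer $n$ with $\gamma^n \in \rat$, so the minimal polynomial of $\gamma$ over $\intg$ divides a polynomial of the form $ax^n - b$. Every root of $x^n - \gamma^n$ has the shape $\zeta\gamma$ for an $n$-th root of unity $\zeta$, so each conjugate $\gamma_i$ can be written as $\zeta_i\gamma$ for some root of unity $\zeta_i$. In particular, $|\gamma_1|=\cdots=|\gamma_d|$.

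The key computation is then
\begin{equation*}
  \norm_{\rat(\gamma)/\rat}(\gamma) = \prod_{i=1}^d \gamma_i = \gamma^d \prod_{i=1}^d \zeta_i = \zeta\cdot\gamma^d
\end{equation*}
where $\zeta := \prod_i \zeta_i$ is itself a root of unity. Since $h$ is well-defined on $G=\algt/\tor(\algt)$, one has $h(\zeta\gamma^d)=h(\gamma^d)=d\cdot h(\gamma)$. Setting $N=\norm_{\rat(\gamma)/\rat}(\gamma)\in\rat$, we get $h(N) = d\cdot h(\gamma)$. But $N$ is rational, so $\deg N = 1$ and $M(N)=h(N)$. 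On the other side, $M(\gamma)=d\cdot h(\gamma)$ by definition of $h$. Comparing the two identities yields $M(\gamma) = M(N)$, which is the claim.

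There is essentially no obstacle here: the content of the lemma is the structural observation that conjugates of a surd form a single orbit under multiplication by roots of unity. The only point one must be slightly careful about is that the product of roots of unity is again a root of unity (so $h$ is unaffected), and that $N$ is genuinely rational, so that its Mahler measure coincides with its Weil height. The proof is therefore essentially the two-line calculation $M(\gamma)=d\,h(\gamma)=h(\gamma^d)=h(N)=M(N)$.
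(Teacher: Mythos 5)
Your proof is correct and follows essentially the same route as the paper: write each conjugate of the surd $\gamma$ as $\zeta_i\gamma$, observe that the norm is $\gamma^d$ times a root of unity, and use the invariance of $h$ under roots of unity together with $h(\gamma^d)=d\cdot h(\gamma)$ and $M=h$ on rationals to conclude $M(\gamma)=d\,h(\gamma)=h(N)=M(N)$. The only difference is cosmetic — you spell out why the conjugates have the form $\zeta_i\gamma$ (as roots of $x^n-\gamma^n$), which the paper asserts without comment.
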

\begin{proof}
  Since $\gamma$ is a surd, its conjugates over $\rat$ are given by
  \begin{equation*}
    \{\zeta_1\gamma,\zeta_2\gamma,\ldots,\zeta_M\gamma\}
  \end{equation*}
  where $M=\deg \gamma$ and $\zeta_m$ are roots of unity.  It now follows that
  \begin{equation*}
    \gamma^M\prod_{m=1}^M\zeta_m = \norm_{\rat(\gamma)/\rat}(\gamma) \in\rat.
  \end{equation*}
  Since $\norm_{\rat(\gamma)/\rat}(\gamma)$ is clearly a rational number, we have that
  \begin{equation*}
    M\left(\norm_{\rat(\gamma)/\rat}(\gamma)\right) = h\left(\gamma^M\prod_{m=1}^M\zeta_m\right) = M\cdot h(\gamma) = \deg\gamma\cdot h(\gamma) = M(\gamma)
  \end{equation*}
  completing the proof.
\end{proof}

In our proof of Theorem \ref{RationalAttained}, it will be necessary to replace an arbitrary representation $\alpha = \alpha_1\cdots\alpha_N$ with 
another representation of $\alpha = \beta_1\cdots\beta_N$ that uses only rational numbers and satisfies
\begin{equation*}
  \sum_{n=1}^NM(\beta_n)^t \leq \sum_{n=1}^N M(\alpha_n)^t.
\end{equation*}
Our next lemma provides us with the necessary elementary number theoretic tools to do this.

\begin{lem} \label{NewSequence}
  Suppose that $m,r_1,\ldots,r_N$ are positive integers such that
  \begin{equation*}
    m\mid \prod_{n=1}^N r_n.
  \end{equation*}
  For $1\leq n\leq N$, recursively define the points $m_n$ by
  \begin{equation} \label{Recursive}
    m_1 = \gcd(r_1,m)\quad\mathrm{and}\quad m_n = \gcd\left(r_n,\frac{m}{\prod_{i=1}^{n-1}m_i}\right).
  \end{equation}
  Then we have that
  \begin{equation*}
    m = \prod_{n=1}^N m_n.
  \end{equation*}
\end{lem}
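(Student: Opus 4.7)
\textbf{Proof sketch of Lemma \ref{NewSequence}.} My plan is to prove by induction on $n$ that $m/\prod_{i=1}^n m_i$ divides $\prod_{i=n+1}^N r_i$, and then obtain the lemma by specializing to $n = N$, where the right-hand product is empty and hence equals $1$.

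First I would check that the recursion makes sense by observing inductively that $\prod_{i=1}^{n-1} m_i$ divides $m$ (since $m_i$ divides the quotient $m/\prod_{j<i} m_j$ by definition), so the gcd in \eqref{Recursive} is taken against a genuine positive integer. Set $M_n = \prod_{i=1}^n m_i$, with the convention $M_0 = 1$. The base case of my induction is $n = 0$: the hypothesis $m \mid \prod_{i=1}^N r_i$ is exactly the statement $m/M_0 \mid r_1 r_2 \cdots r_N$.

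For the inductive step, the main tool is the following elementary fact: if $a, b, c$ are positive integers with $a \mid bc$ and $d = \gcd(a,b)$, then $a/d$ divides $c$. This is proved by writing $a = d a'$ and $b = d b'$ with $\gcd(a', b') = 1$; the hypothesis $a \mid bc$ reduces to $a' \mid b' c$, and coprimality forces $a' \mid c$. Applying this with $a = m/M_{n-1}$, $b = r_n$, and $c = \prod_{i=n+1}^N r_i$, together with the inductive hypothesis that $a \mid bc$, yields that $a/\gcd(a, r_n) = (m/M_{n-1})/m_n = m/M_n$ divides $\prod_{i=n+1}^N r_i$, as required.

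Setting $n = N$ gives $m/M_N \mid 1$, hence $M_N = m$. I do not anticipate a serious obstacle; the only mildly delicate point is the coprime-cancellation fact, which is completely standard. The main content of the lemma is really the clean statement that an inductive, greedy extraction of gcd factors exhausts $m$ whenever the total product of the $r_n$ is divisible by $m$.
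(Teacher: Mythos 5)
Your proof is correct, but it follows a genuinely different route from the paper's. The paper argues by contradiction using prime valuations: assuming $m \neq \prod_{n=1}^N m_n$, it picks a prime $p$ with $\nu_p(m) > \sum_{j=1}^N \nu_p(m_j)$, observes that this shortfall forces $\nu_p\bigl(m/\prod_{j<n} m_j\bigr)$ to stay strictly above $\nu_p(m_n)$ at every step, so the gcd defining $m_n$ must satisfy $\nu_p(m_n) = \nu_p(r_n)$ for \emph{every} $n$, and concludes $\nu_p(m) > \sum_n \nu_p(r_n)$, contradicting $m \mid \prod_n r_n$. You instead prove directly, by induction on $n$, the stronger loop invariant that $m/\prod_{i\leq n} m_i$ divides $\prod_{i>n} r_i$, with the inductive step carried by the standard coprime-cancellation fact (if $a \mid bc$ and $d = \gcd(a,b)$, then $a/d \mid c$), and then specialize to $n = N$. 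Both arguments are sound and short; yours buys a constructive, contradiction-free proof with an explicit invariant that explains \emph{why} the greedy extraction works (the residual factor of $m$ is always absorbed by the remaining $r_i$), while the paper's valuation argument buys a prime-by-prime transparency in which the only arithmetic input is $\nu_p(\gcd(x,y)) = \min\{\nu_p(x),\nu_p(y)\}$ — indeed, your cancellation lemma is precisely the globalization of that local fact. Your handling of well-definedness of the recursion (inductively $\prod_{i<n} m_i \mid m$) matches the remark the paper makes before its proof, and I see no gap anywhere in your argument.
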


Before we provide the proof of Lemma \ref{NewSequence}, we make one clarification regarding the definition of $m_n$.
Naively, it would appear that
\begin{equation*}
  \frac{m}{\prod_{i=1}^{n-1}m_i}
\end{equation*}
is not necessarily an integer, so that taking its greatest common divisor with another integer might not be well-defined.  However, we note immediately that
$m_1\mid m$, which also implies that $m_2$ is well-defined.  Then clearly we have that $m_2 \mid m/m_1$ implying that $m_3$ is also well-defined.  
As we can see, it follows inductively that
\begin{equation*}
  m_n \mid \frac{m}{\prod_{i=1}^{n-1}m_i}
\end{equation*}
for all $1\leq n\leq N$, meaning, in particular, that $m_n$ is well-defined for all such $n$.  Now we may proceed with the proof of Lemma \ref{NewSequence}.

\begin{proof}[Proof of Lemma \ref{NewSequence}]
  We will assume that $m\ne \prod_{n=1}^Nm_n$ and find a contradiction.  Since the product $\prod_{n=1}^Nm_n$ divides $m$, there must exist a prime number $p$ for which
  \begin{equation} \label{NuMLower}
    \nu_p(m) > \sum_{j=1}^N\nu_p(m_j),
  \end{equation}
  where $\nu_p(x)$ denotes the highest power of $p$ dividing the integer $x$.   It now follows that
  \begin{equation*}
    \nu_p(m_n) < \nu_p(m) - \sum_{j=1}^{n-1}\nu_p(m_j) = \nu_p\left( \frac{m}{\prod_{j=1}^{n-1} m_j}\right)
  \end{equation*}
  for every $n\in\{1,\ldots,N\}$.  Hence, the definition of $m_n$ implies that
  \begin{equation*}
    \nu_p(m_n) = \min\left\{ \nu_p(r_n), \nu_p\left( \frac{m}{\prod_{j=1}^{n-1} m_j}\right)\right\} = \nu_p(r_n)
  \end{equation*}
  for every $n\in\{1,\ldots,N\}$.  It now follows from \eqref{NuMLower} that $\nu_p(m) > \sum_{n=1}^N\nu_p(r_n)$, contradicting our 
  assumption that $m$ divides $\prod_{n=1}^N r_n$.
\end{proof}

Now that we have established our key lemmas, we may now proceed with the proof of Theorem \ref{RationalAttained}.

\begin{proof}[Proof of Theorem \ref{RationalAttained}]
	As we have noted in the introduction, the case $t=\infty$ is known \cite{FiliSamuels}, so we proceed immediately to the situation where $1\leq t <\infty$.

  We may assume without loss of generality that $\alpha > 0$.  Since $\alpha$ is rational, there exist positive integers $m$ and $m'$ such that
  $\gcd(m,m') = 1$ and $\alpha = m/m'$.  Furthermore, by the results of \cite{SamuelstMetric1}, there exist surds $\alpha_1,\ldots,\alpha_N$ such that
  \begin{equation} \label{SurdEquality}
    \alpha = \alpha_1\cdots\alpha_N\quad\mathrm{and}\quad M_t(\alpha)^t = \sum_{n=1}^NM(\alpha_n)^t.
  \end{equation}
  Let $K$ be a number field containing $\alpha_1,\ldots\alpha_N$.  Now we may take the norm from $K$ to $\rat$ of both sides of the first equation in \eqref{SurdEquality}.
  We apply \eqref{NormDescend} and the fact that the $\norm_{K/\rat}$ is a homomorphism to establish that
  \begin{equation*}
    \left(\frac{m}{m'}\right)^{[K:\rat]} = \prod_{n=1}^N \norm_{K/\rat}(\alpha_n) = \prod_{n=1}^N\left(\norm_{\rat(\alpha_n)/\rat}(\alpha_n)\right)^{[K:\rat(\alpha_n)]}.
  \end{equation*}
  Suppose further that, for each $1\leq n\leq N$, $r_n$ and $s_n$ are relatively prime positive integers such that
  \begin{equation*}
    \frac{r_n}{s_n} = \pm \norm_{\rat(\alpha_n)/\rat}(\alpha_n).
  \end{equation*}
  Therefore, we have that
  \begin{equation*}
    \left(\frac{m}{m'}\right)^{[K:\rat]} = \pm\prod_{n=1}^N\left(\frac{r_n}{s_n}\right)^{[K:\rat(\alpha_n)]}.
  \end{equation*}
  It is obvious that $[K:\rat(\alpha_n)]\mid [K:\rat]$ so we obtain that
  \begin{equation*}
    m^{[K:\rat]}\mid \left(\prod_{n=1}^Nr_n\right)^{[K:\rat]}\quad\mathrm{and}\quad m'^{[K:\rat]}\mid \left(\prod_{n=1}^Ns_n\right)^{[K:\rat]}.
  \end{equation*}
  It follows from elementary number theory facts that
  \begin{equation}\label{Divides}
    m \mid \prod_{n=1}^Nr_n\quad\mathrm{and}\quad m'\mid \prod_{n=1}^Ns_n.
  \end{equation}

  Setting up the hypotheses of Lemma \ref{NewSequence}, we define recursive sequences corresponding to $m$ and $m'$.  First set
  \begin{equation*}
     m_1 = \gcd(r_1,m)\quad\mathrm{and}\quad m_n = \gcd\left(r_n,\frac{m}{\prod_{i=1}^{n-1}m_i}\right)
   \end{equation*}
   and
   \begin{equation*}
     m'_1 = \gcd(s_1,m')\quad\mathrm{and}\quad m'_n = \gcd\left(s_n,\frac{m'}{\prod_{i=1}^{n-1}m'_i}\right)
   \end{equation*}
   so we clearly have that
   \begin{equation} \label{RSInequalities}
     |r_n| \geq |m_n|\quad\mathrm{and}\quad |s_n|\geq |m'_n|.
   \end{equation}
   Applying Lemma \ref{NewSequence}, we have that
   \begin{equation*}
     m = \prod_{n=1}^Nm_n\quad\mathrm{and}\quad m' = \prod_{n=1}^Nm'_n
   \end{equation*}
   so that
   \begin{equation} \label{CorrectRationalRep}
     \alpha = \frac{m}{m'} = \prod_{n=1}^N\frac{m_n}{m'_n}.
   \end{equation}
   Now it follows from the definition of $M_t(\alpha)$ that
   \begin{equation} \label{CorrectUpper}
     M_t(\alpha)^t \leq \sum_{n=1}^NM\left(\frac{m_n}{m'_n}\right)^t,
   \end{equation}
   so we must show that the right hand side of \eqref{CorrectUpper} is also a lower bound for $M_t(\alpha)^t$.

   To see this, note that by Lemma \ref{NormConversion}, we have that
   \begin{equation*}
     M(\alpha_n) = M\left(\norm_{\rat(\alpha_n)/\rat}(\alpha_n)\right) = M\left(\frac{r_n}{s_n}\right)
   \end{equation*}
   for all $1\leq n\leq N$.  We have assumed that $r_n$ and $s_n$ are relatively prime, so it follows from known facts about the Mahler measure that
   \begin{equation*}
     M(\alpha_n) = \log \max\{|r_n|,|s_n|\}.
   \end{equation*}
   Then applying \eqref{RSInequalities}, we find that
   \begin{equation*}
     M(\alpha_n) \geq \log \max\{|m_n|,|m'_n|\} \geq M\left(\frac{m_n}{m'_n}\right),
   \end{equation*}
   and consequently,
   \begin{equation*}
     M_t(\alpha)^t = \sum_{n=1}^N M(\alpha_n)^t \geq \sum_{n=1}^N M\left(\frac{m_n}{m'_n}\right)^t.
   \end{equation*}
   Combining this with \eqref{CorrectRationalRep} and \eqref{CorrectUpper}, the result follows.
     
\end{proof}

\section{The quadratic case}

Our first lemma gives one particular set of points that attain the infimum in $M_t(\sqrt D)$ for all $t\in [1,\infty]$.  When $t>1$, we can also identify the Mahler measures 
of any points $\alpha_1,\ldots,\alpha_N$ attaining the infimum in $M_t(D^{1/k})$.

\begin{lem} \label{TameInfimum}
	Suppose that $p_1,\ldots,p_L$ are distinct primes written in decreasing order, $D=p_1\cdots p_L$, $t\in [1,\infty)$, and $k\in\nat$.
	The infimum in $M_t(D^{1/k})$ is attained by $p_1^{1/k},\ldots,p_L^{1/k}$ and
	\begin{equation*}
		M_t(D^{1/k})^t = \sum_{\ell = 1}^L(\log p_\ell)^t.
	\end{equation*}
	If $t>1$ and $\alpha_1,\cdots,\alpha_N$ are algebraic numbers attaining the infimum in $M_t(D^{1/k})$ then
	$N\geq L$.  Moreover, it is possible to relabel the elements $\alpha_1,\ldots,\alpha_N$ so that
	\begin{enumerate}[(i)]
		\item\label{Smalln} $M(\alpha_n) = \log p_n$ for all $n\leq L$, and
		\item\label{Largen} $M(\alpha_n) = 0$ for all $n > L$.
	\end{enumerate}
	In particular, $M(\alpha_n) \leq \log p_1$ for all $n$.
\end{lem}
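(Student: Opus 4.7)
The plan is to establish $M_t(D^{1/k})^t = \sum_\ell (\log p_\ell)^t$ by matching upper and lower bounds, then read off items~(i) and~(ii) for $t>1$ by tracing equality cases. The upper bound is immediate from the factorization $D^{1/k} = \prod_{\ell=1}^L p_\ell^{1/k}$: since $D$ is squarefree, $x^k - p_\ell$ is Eisenstein at $p_\ell$ and hence the minimal polynomial of $p_\ell^{1/k}$ over $\rat$, with leading coefficient $1$ and all roots of modulus $p_\ell^{1/k}$. Thus $M(p_\ell^{1/k}) = \log p_\ell$ and $M_t(D^{1/k})^t \leq \sum_\ell (\log p_\ell)^t$.

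For the lower bound, I would take any factorization $D^{1/k} = \alpha_1 \cdots \alpha_N$, write $\norm_{\rat(\alpha_n)/\rat}(\alpha_n) = r_n/s_n$ in lowest terms, and invoke the standard bound $M(\alpha_n) \geq h(\norm_{\rat(\alpha_n)/\rat}(\alpha_n)) = \log\max(|r_n|,|s_n|)$, which follows from subadditivity of the Weil height under products of Galois conjugates. Embedding all the $\alpha_n$ in a Galois number field $K$ over $\rat$ and applying $\norm_{K/\rat}$ to $\prod_n \alpha_n = D^{1/k}$ yields the rational identity $\prod_n (r_n/s_n)^{e_n} = \pm D^E$ with $E = [K:\rat]/k$ and $e_n = [K:\rat(\alpha_n)]$. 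Comparing $p_\ell$-adic valuations, and using $e_n/E = k/d_n$ with $d_n = [\rat(\alpha_n):\rat]$, produces the key constraint
\begin{equation*}
\sum_n \frac{v_{p_\ell}(r_n/s_n)}{d_n} = \frac{1}{k} \qquad \text{for every } \ell.
\end{equation*}
Since the right-hand side is positive, for each $\ell$ there is some index $n^+(\ell)$ with $v_{p_\ell}(r_{n^+(\ell)}/s_{n^+(\ell)}) \geq 1$; then $v_{p_\ell}(r_n) \geq 1$ for every $\ell$ in the fiber $\{\ell : n^+(\ell) = n\}$, so $|r_n| \geq \prod_{\ell:\,n^+(\ell)=n} p_\ell$ and hence $\log\max(|r_n|,|s_n|) \geq \sum_{\ell:\,n^+(\ell)=n} \log p_\ell$. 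Combining with the superadditivity $(\sum_j x_j)^t \geq \sum_j x_j^t$ valid for $x_j \geq 0$ and $t \geq 1$, and summing over $n$, yields $\sum_n M(\alpha_n)^t \geq \sum_\ell (\log p_\ell)^t$.

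For items~(i) and~(ii) under the stronger hypothesis $t>1$, I would trace equality cases in the above chain. Strict convexity $(a+b)^t > a^t + b^t$ for $a,b > 0$ forces each fiber of $n^+$ to be a singleton, so each $\alpha_n$ handles at most one prime; tightness in the remaining inequalities then forces $M(\alpha_n) = \log p_\ell$ on handling indices (with $|r_n|=p_\ell$ and $|s_n|=1$) and $M(\alpha_n) = 0$ on the rest, so each such $\alpha_n$ is a root of unity by Kronecker's theorem. The $L$ primes require $L$ distinct handling indices, giving $N \geq L$; relabeling so that the handling indices come first in the decreasing order $p_1 > \cdots > p_L$ produces~(i) and~(ii), and $M(\alpha_n) \leq \log p_1$ is then automatic. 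The step I expect to be most delicate is verifying that the assignment $n^+$ can be chosen to make the chain tight in the equality case; the combinatorial covering argument is otherwise routine.
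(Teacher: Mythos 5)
Your proposal is correct and follows essentially the same route as the paper's proof: Eisenstein's criterion for the upper bound, an application of the norm down to $\rat$ to force each $p_\ell$ into some numerator $r_n$, the superadditivity $\bigl(\sum_j x_j\bigr)^t \ge \sum_j x_j^t$, and its strictness for $t>1$ to pin down the equality case, $N\ge L$, and items (i)--(ii). The one real variation is that you obtain $M(\alpha_n)\ge \log\max(|r_n|,|s_n|)$ from subadditivity of the Weil height applied to the norm, whereas the paper extracts the weaker but sufficient bound $M(\alpha_n)\ge \log r_n$ from the constant coefficient of the minimal polynomial over $\intg$ via an $\lcm$ computation; your normalized valuation identity $\sum_n v_{p_\ell}(r_n/s_n)/d_n = 1/k$ is a transparent repackaging of the paper's divisibility comparison in \eqref{DPower}. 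One parenthetical claim in your equality analysis is false, though harmless since nothing downstream uses it: tightness forces only $\max(|r_n|,|s_n|) = p_\ell$ on a handling index, not $|s_n| = 1$ --- for instance, with $D = 14$ and $t>1$, the factorization $\sqrt{14} = \sqrt{7/2}\cdot 2$ attains the infimum, yet $\norm_{\rat(\sqrt{14})/\rat}\bigl(\sqrt{7/2}\bigr) = -7/2$, so $s_n = 2$ (this is exactly the configuration exploited in Theorem \ref{QuadraticNotAttained}). Finally, the step you flag as delicate is not: the chain of inequalities holds for \emph{every} choice of the assignment $n^+$, so equality of its two endpoints forces termwise equality for whichever fixed choice you make, and no careful selection is needed.
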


\begin{proof}
We certainly have that $D^{1/k} = p_1^{1/k}\cdots p_\ell^{1/k}$, and by the definition of $M_t$, we know that
\begin{equation*} 
	M_t(D^{1/k})^t \leq \sum_{\ell =1}^L M(p_\ell^{1/k})^t.
\end{equation*}
For each $\ell$, we know that $x^k - p_\ell$ vanishes at $p_\ell^{1/k}$ and is irreducible by Eisenstein's criterion, so that $M(p_\ell^{1/k}) = M(x^k-p_\ell) = \log p_\ell$.
Hence, we find that
\begin{equation}\label{StandardUpper}
	M_t(D^{1/k})^t \leq \sum_{\ell = 1}^L (\log p_\ell)^t.
\end{equation}
To prove the first statement of the lemma, it is now sufficient to show that
\begin{equation} \label{StandardUpper2}
	M_t(D^{1/k})^t \geq \sum_{\ell = 1}^L (\log p_\ell)^t.
\end{equation}

Now suppose $\alpha_1,\ldots,\alpha_N\in\alg$ attain the infimum in $M_t(D^{1/k})$ and select a number field $K$ containing $D^{1/k},\alpha_1,\ldots,\alpha_N$.  
By definition, we know that $D^{1/k} = \alpha_1\cdots\alpha_N$.  Using the fact that  $\norm_{K/\rat}$ is a multiplicative homomorphism, we obtain that
\begin{equation*}
	\norm_{K/\rat}(D^{1/k}) = \prod_{n=1}^N\norm_{K/\rat}(\alpha_N)
\end{equation*}
so that
\begin{equation} \label{PreliminaryNorm}
	\left(\norm_{\rat(D^{1/k})/\rat}(D^{1/k})\right)^{[K:\rat(D^{1/k})]} = \prod_{n=1}^N\left(\norm_{\rat(\alpha_n)/\rat}(\alpha_N)\right)^{[K:\rat(\alpha_n)]}.
\end{equation}
Each of the above norms is a rational number.  Hence, for each $n$, there exist positive relatively prime integers $r_n$ and $s_n$ such that
\begin{equation*}
	|\norm_{\rat(\alpha_n)/\rat}(\alpha_n)| = \frac{r_n}{s_n}.
\end{equation*}
Again using Eisenstein's Criterion, we know that $x^k-D$ is the minimal polynomial of $D^{1/k}$ over $\rat$, implying that $|\norm_{\rat(D^{1/k})/\rat}(D^{1/k})| = D$.
Substituting these values into \eqref{PreliminaryNorm}, we find that
\begin{equation} \label{DPower}
	D^{[K:\rat(D^{1/k})]} = \prod_{n=1}^N\left(\frac{r_n}{s_n}\right)^{[K:\rat(\alpha_n)]}.
\end{equation}
For each $n$, $\alpha_n$ has minimal polynomial of the form
\begin{equation*}
	\hat f_n(x) = x^d + \frac{a_{d-1}}{b_{d-1}}x^{d-1} + \cdots + \frac{a_1}{b_1}x \pm \frac{r_n}{s_n}
\end{equation*}
over $\rat$ for integers $a_1,\ldots,a_{d-1},b_1,\ldots,b_{d-1}$ with $b_i\ne 0$ and $(a_i,b_i) = 1$.  Hence, its minimal polynomial over $\intg$ is given by
\begin{equation*}
	f_n(x) = \lcm(s_n, b_{d-1}, \ldots, b_1)\cdot x^d + \cdots \pm r_n \cdot \lcm(s_n, b_{d-1},\ldots, b_1)
\end{equation*}
and its Mahler measure satisfies
\begin{equation*} \label{RLower}
	M(\alpha_n) \geq \log \left(\frac{r_n}{s_n}\cdot \lcm(s_n, b_{d-1},\ldots, b_1)\right) \geq \log r_n.
\end{equation*}

For each $n$, let
\begin{equation*}
	P_n = \left\{ p\in\{p_1,\ldots,p_L\} : p\mid r_n\right\}.
\end{equation*}
We have assumed that $\alpha_1,\ldots,\alpha_N$ attains the infimum in $M_t(D^{1/k})$, so we get that
\begin{equation} \label{SlightlyFancyLower}
	M_t(D^{1/k})^t = \sum_{n=1}^N M(\alpha_n)^t \geq \sum_{n=1}^N (\log r_n)^t \geq \sum_{n=1}^N\left( \sum_{p\in P_n} \log p\right)^t.
\end{equation}
Since $t \geq 1$, we always have that
\begin{equation} \label{Mink}
	\left( \sum_{p\in P_n} \log p\right)^t \geq \sum_{p\in P_n} (\log p)^t,
\end{equation}
which implies that
\begin{equation*}
	M_t(D^{1/k})^t \geq \sum_{n=1}^N\sum_{p\in P_n} (\log p)^t.
\end{equation*}
However, applying \eqref{DPower}, we know that for each $\ell\in\{1,\ldots,L\}$, there exists $n\in\{1,\ldots,N\}$ such that $p_\ell\in P_n$, establishing
\eqref{StandardUpper2} and the first statement of the lemma.

Now assume that $t>1$.   If $|P_n|\geq 2$, then we must have strict inequality in \eqref{Mink}.
Therefore, if $|P_n| \geq 2$ for some $n$, then \eqref{SlightlyFancyLower} implies that
\begin{equation*}
	M_t(D^{1/k})^t > \sum_{n=1}^N \sum_{p\in P_n} (\log p)^t \geq \sum_{\ell = 1}^L (\log p_\ell)^t
\end{equation*}
contradicting \eqref{StandardUpper}.  Therefore, $|P_n| \leq 1$ for every $n$ and we have established that
\begin{enumerate}[(a)]
	\item For every $\ell$, there exists $n$ such that $p_\ell\mid r_n$, and
	\item If $\ell_1 \ne \ell_2$ then we can never have that $p_{\ell_1}\mid r_n$ and $p_{\ell 2}\mid r_n$.
\end{enumerate}
It follows from the box principle that $N\geq L$.  Moreover, we may reorder $\alpha_1,\ldots,\alpha_N$ such that $p_n\mid r_n$
for all $1\leq n\leq L$, which shows that
\begin{equation} \label{EqualityLower}
	M(\alpha_n) \geq \log r_n \geq \log p_n\quad\mathrm{for}\ 1\leq n\leq L.
\end{equation}
If we have strict inequality in \eqref{EqualityLower} for some $n$, then
\begin{equation} \label{Strictness}
	M_t(D^{1/k})^t =\sum_{n=1}^N M(\alpha_n)^t > \sum_{\ell = 1}^L (\log p_\ell)^t
\end{equation}
contradicting \eqref{StandardUpper} and establishing \eqref{Smalln}.  Similarly, if $M(\alpha_n) > 0$ for some $n>L$, then
\eqref{Strictness} holds as well verifying \eqref{Largen}.

\end{proof}

Now that we have proven Lemma \ref{TameInfimum}, the proof of Theorem \ref{QuadraticValue} is essentially complete.  Indeed, when $t\in [1,\infty)$
Theorem \ref{QuadraticValue} is simply the first statement of Lemma \ref{TameInfimum}, and the case $t=\infty$ was given already in \cite{FiliSamuels}.
The only task remaining is to prove Theorem \ref{QuadraticNotAttained}, in which the second statement of Lemma \ref{TameInfimum} plays a key role.

Before proceeding, we establish some conventions that will be used for the remainder of this article.  For $d\in\intg$ and $r\in\rat$, we say that {\it $d$ divides $r$} if when
$r$ is written $r=m/n$ with $m \in \nat$, $n \in\intg\setminus\{0\}$ and $(m, n)=1$, then either $d\mid m$ or $d\mid n$.   We say that {\it $d$ divides the numerator or denominator of $r$}
if $d$ divides $m$ or $n$, respectively.

We say that an algebraic number $\alpha$ is {\it stable} if all of its conjugates lie either inside the open unit disk, on the unit circle, or outside the closed unit disk.
Otherwise, we say that $\alpha$ is {\it unstable}.  It is clear that all rational numbers and all imaginary quadratic numbers are stable, while real quadratic numbers can 
be either stable or unstable.   If $\alpha$ is any algebraic number having minimal polynomial
\begin{equation*}
	f(x) = a_Nx^{N} + \dots + a_1x + a_0,
\end{equation*}
then it is simple to verify that
\begin{equation*}
	M(\alpha) \geq \log\max\{|a_N|,|a_0|\}
\end{equation*}
with equality if and only if $\alpha$ is stable.  We now state a simple criterion which allows us to determine if a quadratic algebraic number is stable by considering 
the coefficients of the minimal polynomial.

 \begin{lem} \label{quadr} 
 Suppose that $\alpha$ is a quadratic algebraic number having minimal polynomial $f(x) = ax^2 + bx + c$ over $\intg$.  We have that $\alpha$ is stable if and only if
 $|a+c| > |b|$.  In this situation, the following hold.
 \begin{enumerate}[(i)]
 \item\label{outside} If $|a|<|c|$ then both conjugates of $\alpha$ have modulus greater than one.
 \item\label{on} If $|a|=|c|$ then both conjugates of $\alpha$ have modulus one.
 \item\label{inside} If $|a| > |c|$ then both conjugates of $\alpha$ have modulus less than one.
 \end{enumerate}
 \end{lem}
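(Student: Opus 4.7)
The plan is to work directly with the factored form $f(x) = a(x-\alpha)(x-\beta)$, where $\beta$ is the algebraic conjugate of $\alpha$, and exploit Vieta's relations $\alpha\beta = c/a$ and $\alpha + \beta = -b/a$. The proof naturally splits into two cases depending on whether $\alpha$ is real or non-real.

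If $\alpha$ is non-real, then $\beta = \bar\alpha$, so $|\alpha| = |\beta|$, making $\alpha$ automatically stable. To match this with the stated criterion, I would observe that $c/a = |\alpha|^2 > 0$ forces $a$ and $c$ to share a sign, so $|a+c| = |a|+|c|$, and then use the negative discriminant $b^2 < 4ac$ together with the AM-GM-type bound $4|a||c| \leq (|a|+|c|)^2$ to conclude $|b| < |a+c|$.

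For real $\alpha$, both conjugates are real, and since $f$ is irreducible over $\intg$ neither root equals $\pm 1$. The key computation is
\begin{equation*}
  f(1) \cdot f(-1) = (a+b+c)(a-b+c) = (a+c)^2 - b^2,
\end{equation*}
while on the other hand $f(1)f(-1) = a^2(1-\alpha^2)(1-\beta^2)$. Inspection of signs shows that $(1-\alpha^2)(1-\beta^2) > 0$ precisely when $\alpha$ and $\beta$ lie on the same side of the unit circle (i.e., $\alpha$ is stable), and is negative when exactly one lies inside. Thus stability is equivalent to $(a+c)^2 > b^2$, as desired. The boundary case $|a+c|=|b|$ would force $f(\pm 1)=0$, contradicting irreducibility, so it never arises.

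For the refined statements (i)--(iii), assuming stability, the product $|\alpha\beta| = |c/a|$ alone decides the location: $|c/a| > 1$, $=1$, or $<1$ forces both conjugates to lie outside, on, or inside the unit circle, respectively, giving exactly the three cases. I do not anticipate any serious obstacle; the only mild subtlety is handling the non-real quadratic case separately to verify that $|a+c| > |b|$ always holds there, since $f(1)f(-1)$ can also be analyzed in that range but the geometric interpretation of ``stable'' is automatic.
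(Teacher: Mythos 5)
Your proof is correct and takes essentially the same approach as the paper: both reduce stability to the sign condition $f(1)f(-1) = (a+b+c)(a-b+c) = (a+c)^2 - b^2 > 0$ and then settle cases (i)--(iii) from $|\alpha\beta| = |c/a|$ exactly as the paper does. The only cosmetic difference is that you treat the non-real case separately via the discriminant bound $b^2 < 4ac \leq (|a|+|c|)^2$, whereas the paper's root-counting argument (opposite signs at $\pm 1$ force exactly one real root in $(-1,1)$; equal signs force zero or two) handles real and complex roots uniformly.
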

 
 \begin{proof} 
   Suppose that $f(x) = a(x-\alpha)(x-\beta)$.  If $f(1)$ and $f(-1)$ have opposite signs, then $f$ has precisely one root in the interval $(-1,1)$.  
   The other root must also be real and lie outside of $(-1,1)$, so $\alpha$ is unstable.  If $f(1)$ and $f(-1)$ have the same sign, then $f$ has either zero or two roots in $(-1,1)$.
   In the case of two roots in $(-1,1)$, $\alpha$ is clearly stable.  If $f$ has zero roots in $(-1,1)$, then it either has two complex roots, in which case $\alpha$ is certainly stable, or
   two real roots both lying outside of $[-1,1]$, also implying that $\alpha$ is stable.

   We have now shown that $\alpha$ is stable if and only if $f(1) = a+b+c$ and $f(-1) = a-b+c$ have the same sign.
   Clearly, $f(1)$ and $f(-1)$ are both positive if and only if $a+c > |b|$ and both negative if and only if $-a-c > |b|$.  Thus, $\alpha$ is stable if and only if $|a+c|>|b|$.
   
   If, in addition, $|a| < |c|$, then $|\alpha\beta| = |c|/|a| > 1$, so both $\alpha$ and $\beta$ have modulus greather than $1$.  Similarly, if $|a| > |c|$, then
   $|\alpha\beta| = |c|/|a| < 1$ implying that both $\alpha$ and $\beta$ have modulus less than $1$.  Finally, if $|a| = |c|$ then $|\alpha\beta| = 1$.  Since $\alpha$
   is stable, $\alpha$ and $\beta$ must be complex conjugate numbers both of modulus $1$.
\end{proof}
 
 The following lemma shows us that certain quadratic algebraic numbers, which we will encounter in the proof of Theorem \ref{QuadraticNotAttained}, have
 relatively simple minimal polynomials.

\begin{lem}\label{small}
Let $D$ be a square-free integer, $p$ be a prime divisor of $D$, and $\alpha$ a quadratic algebraic number in $\rat(\sqrt D)$.  If $M(\alpha) \leq \log p$ and $p$ divides 
the numerator of $\norm_{\rat(\sqrt D)/\rat}(\alpha)$ then $\alpha$ is stable.  Moreover, the minimal polynomial of $\alpha$ satisfies
\begin{equation*}
	f(x) = ax^2\pm p\quad\mathrm{or}\quad f(x) = ax^2\pm px +p
\end{equation*}
where $a$ is a positive integer with $a<p$.
\end{lem}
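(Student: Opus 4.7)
The plan is to write the primitive minimal polynomial of $\alpha$ over $\intg$ as $f(x) = ax^2 + bx + c$ with $a > 0$, note that $c \ne 0$ since $\alpha$ is quadratic, and observe that $\norm_{\rat(\sqrt D)/\rat}(\alpha)$ equals $c/a$ (the product of the two Galois conjugates of $\alpha$). Under the convention introduced just before the lemma, the divisibility hypothesis on the norm then translates to $p \mid |c|/\gcd(a,|c|)$.

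The first real step is to nail down $|a|$ and $|c|$. I would invoke the inequality $M(\alpha) \geq \log\max\{|a|,|c|\}$ recalled just before the lemma, together with $M(\alpha) \leq \log p$, to conclude that $|a| \leq p$ and $|c| \leq p$. Writing $g = \gcd(a,|c|)$, the divisibility hypothesis gives $pg \leq |c| \leq p$, forcing $g = 1$ and $|c| = p$. Hence $c = \pm p$ and $1 \leq a < p$. Moreover $M(\alpha) = \log p = \log|c|$ is an equality in the above display, so the cited equality clause immediately gives that $\alpha$ is stable; by Lemma \ref{quadr}\eqref{outside} its conjugates lie outside the closed unit disk.

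The second step is to show that $p \mid b$. Since $\alpha \in \rat(\sqrt D)\setminus\rat$ and $D$ is square-free, the discriminant must satisfy $b^2 - 4ac = q^2 D$ for some nonzero integer $q$. Because $p \mid D$ and $p \mid c$, this forces $p \mid b^2$, and hence $p \mid b$ as $p$ is prime.

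The final step is case analysis driven by the stability criterion $|a+c| > |b|$ from Lemma \ref{quadr}. If $c = -p$ then $|b| < p - a < p$, so $p \mid b$ forces $b = 0$ and $f(x) = ax^2 - p$. If $c = +p$ then $|b| < a + p < 2p$, so $p \mid b$ leaves $b \in \{0, \pm p\}$, yielding $f(x) = ax^2 + p$ or $f(x) = ax^2 \pm p x + p$. Together these are exactly the polynomials stated in the lemma. The main obstacle I anticipate is the clean bookkeeping in the first step that converts the divisibility of the numerator of $c/a$ (in lowest terms) into the pair of conclusions $|c| = p$ and $\gcd(a,c) = 1$; once that is in hand, everything else reduces to mechanical use of Lemma \ref{quadr} and the discriminant identity.
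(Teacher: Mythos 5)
Your proposal is correct and follows essentially the same route as the paper: the same setup $f(x)=ax^2+bx+c$ with norm $c/a$, the equality case of $M(\alpha)\geq\log\max\{|a|,|c|\}$ to force $|c|=p$ and stability, the discriminant identity $b^2-4ac=Dv^2$ to get $p\mid b$, and the criterion $|a+c|>|b|$ of Lemma \ref{quadr} to reduce to $b\in\{0,\pm p\}$ and sort the sign cases. The only (harmless) variation is bookkeeping: you extract $\gcd(a,|c|)=1$ directly from the lowest-terms hypothesis to get $a<p$, where the paper instead rules out $|a|=|c|$ by noting the norm would be $\pm 1$ and hence not divisible by $p$.
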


\begin{proof}
Suppose that $f(x) = ax^2+bx+c \in \Z[x]$ is the minimal polynomial of $\alpha$ over $\intg$, so we may assume that $a>0$.  Since $\alpha$ has degree $2$, we have that
\begin{equation*}
	\norm_{\rat(\sqrt D)/\rat}(\alpha) = \alpha\bar\alpha = \frac{c}{a}
\end{equation*}
where $\bar\alpha$ is the conjugate of $\alpha$ over $\rat$.  We have assumed that $p$ divides the numerator of $\norm_{\rat(\sqrt D)/\rat}(\alpha)$, which itself must divide $c$,
implying that $p\mid c$.  Since $M(\alpha) \geq \log\max\{|a|,|c|\}$, we have that 
\begin{equation*}
	\log p \leq \log |c| \leq \log\max\{|a|,|c|\} \leq M(\alpha) \leq \log p,
\end{equation*}
and we conclude that 
\begin{equation} \label{CEqual}
	M(\alpha) = \log |c| = \log p.
\end{equation}
It now follows that $|a| \leq |c|$ and, since $M(\alpha)$ is the log of an integer, we further obtain that $\alpha$ is stable.  Hence, Lemma \ref{quadr} implies that $|a+c| > |b|$.

We cannot have $|a| = |c|$, since 
\begin{equation*}
	\norm_{\rat(\sqrt D)/\rat}(\alpha) = \frac{c}{a} = \pm 1
\end{equation*}
is not divisible by $p$, so it follows that $|a| < |c|$. In view of Lemma \ref{quadr} \eqref{outside}, we have that $|\alpha|,|\bar\alpha| > 1$.  Therefore, we find that
\begin{equation} \label{bBound}
	|b| < |a+c| \leq |a| + |c| < 2|c| = 2p.
\end{equation}
Now let $\Delta = b^2-4ac$. Since $\rat(\sqrt{\Delta}) = \rat(\sqrt{D})$, and $D$ is square-free, we have 
$\Delta = Dv^2$ for some $v \in \intg$. The quadratic formula gives 
\begin{equation*}
	\norm_{\rat(\sqrt D)/\rat}(\alpha) = \alpha\bar\alpha = \frac{b^2-Dv^2}{4a^2},
\end{equation*}
and since $p\mid D$ and the numerator of $\norm_{\rat(\sqrt D)/\rat}(\alpha)$, it follows that $p$ divides $b^2$.   Of course, this implies that $p \mid b$.
Using \eqref{bBound}, we now see that $b\in\{0,p,-p\}$.

If $b = 0$ then we have by \eqref{CEqual} that $f (x)=ax^2\pm p$, establishing the lemma in this case. If $b= \pm p$, then $|a+c|>|b|$ holds if and only if $a$ and $c$ has the same sign. 
So in this situation, \eqref{CEqual} yields that $c=p$ which leads to $f(x) =ax^2 \pm px+p$.
\end{proof}

\begin{proof}[Proof of Theorem \ref{QuadraticNotAttained}]
By Theorem \ref{QuadraticValue}, we know that
\begin{equation*}
	M_t(\sqrt D) = \begin{cases} \left(\sum_{\ell = 1}^L (\log p_\ell)^t\right)^{1/t} & \mathrm{if\ } t\in (1,\infty) \\
					\log p_1 & \mathrm{if\ } t = \infty. \end{cases}
\end{equation*}
We also observe that
\begin{equation} \label{DRep}
	\sqrt D = \sqrt{\frac{p_1}{p_2\cdots p_L}} \cdot p_2\cdots p_L
\end{equation}
and that each term in the product on the right hand side of \eqref{DRep} belongs to $\rat(\sqrt D)$.  We obviously have that $M(p_\ell) = \log p_\ell$ for all $\ell$.
Furthermore, our assumption that $D < p_1^2$ ensures that $p_2\cdots p_L < p_1$, so it follows that
\begin{equation*}
	M\left( \sqrt{\frac{p_1}{p_2\cdots p_L}}\right) = \log p_1.
\end{equation*}
Combining these observations, we see that
\begin{equation*}
	 M\left( \sqrt{\frac{p_1}{p_2\cdots p_L}}\right)^t + \sum_{\ell = 2}^L M(p_\ell)^t =  \sum_{\ell = 1}^L (\log p_\ell)^t = M_t(\sqrt D)^t 
\end{equation*}
when $1<t<\infty$ and
\begin{equation*}
	\max\left\{ M\left( \sqrt{\frac{p_1}{p_2\cdots p_L}}\right), M(p_2),\ldots, M(p_L)\right\} = \log p_1 = M_\infty(\sqrt D).
\end{equation*}
establishing one direction of the theorem as well as the second statement.

To prove the other direction, we assume that there exist points $\alpha_1,\ldots,\alpha_N\in\rat(\sqrt D)$ that attain the infimum in $M_t(\sqrt D)$,
and for simplicity, we set $p=p_1$.  When $t\in (1,\infty)$, Lemma \ref{TameInfimum} establishes that that $M(\alpha_n) \leq \log p$ for all $n$.
In the case $t=\infty$, we also have $M(\alpha_n) \leq \log p$ for all $n$ as a consequence of Theorem \ref{QuadraticValue}.
Since $\sqrt D = \alpha_1\cdots\alpha_N$, we have that
\begin{equation}\label{norms}
-D = \norm_{\rat(\sqrt D)/\rat}(\sqrt D) = \prod_{n=1}^N\norm_{\rat(\sqrt D)/\rat}(\alpha_n).
\end{equation}
Defining the set
\begin{equation*}
	\Lambda = \left\{1\leq n\leq N: p\mid \norm_{\rat(\sqrt D)/\rat}(\alpha_n)\right\}
\end{equation*}
we apply \eqref{norms} to see that
\begin{equation} \label{Pnorms}
	\sum_{n\in\Lambda}\nu_{p}\left(\norm_{\rat(\sqrt D)/\rat}(\alpha_n)\right) = \nu_{p}(D) = 1,
\end{equation}
where the last equality follows since $D$ is square-free.  If $\Lambda$ contains no irrational points, then we have that
\begin{equation*}
	p\mid \norm_{\rat(\sqrt D)/\rat}(\alpha_n) = \alpha_n^2
\end{equation*}
for all $n\in\Lambda$.  However, this implies that $\nu_p(\norm_{\rat(\sqrt D)/\rat}(\alpha_n))$ is even for all $n\in\Lambda$.  It follows that
the left hand side of \eqref{Pnorms} is also even, a contradiction.

We have shown that there must exist $n$ such that $\alpha_n$ is quadratic, $M(\alpha_n)\leq \log p$, and $p$ divides $\norm_{\rat(\sqrt D)/\rat}(\alpha_n)$.
If $p$ divides the numerator of $\norm_{\rat(\sqrt D)/\rat}(\alpha_n)$, then we may apply Lemma \ref{small} to see that $\alpha_n$ is stable and is a root of
\begin{equation*}
	f(x) = ax^2\pm p\quad\mathrm{or}\quad f(x) = ax^2\pm px +p
\end{equation*}
for some positive integer $a<p$.

Suppose now that $\Delta$ is the discriminant of $f$.  Since $\alpha_n$ is quadratic over $\rat$, we have $\rat(\sqrt{\Delta}) = \rat(\sqrt{D})$. 
Furthermore, since $D$ is a square-free, we have that $\Delta=Dv^2$ for some $v \in \nat$.  If $f(x) = ax^2\pm p$, we see that $\Delta = \pm 4ap$,
so that 
\begin{equation*}
	pp_2\cdots p_L v^2 = Dv^2 = \pm 4ap.
\end{equation*}
Since $p_2,\ldots,p_L$ are distinct primes,  we obtain that $p_2\cdots p_L \mid a$, and hence,
\begin{equation*}
	p_2\cdots p_L \leq a <p,
\end{equation*} 
establishing that $D < p^2$ in this case.

If $f(x) = ax^2\pm px +p$ then $\Delta = p^2-4ap=p(p-4a)$. We have assume that $D$ is positive so that $p - 4a > 0$, and, trivially, $p-4a < p$. 
Hence, $D \leq \Delta = p(p-4a) < p^2$ completing the proof when $p$ divides the numerator of $\norm_{\rat(\sqrt D)/\rat}(\alpha_n)$.

If $p$ divides the denominator of $\norm_{\rat(\sqrt D)/\rat}(\alpha_n)$ instead, the $p$ must divide the numerator of $\norm_{\rat(\sqrt D)/\rat}(\alpha_n^{-1})$.
Of course, we also have that $M(\alpha_n^{-1})\leq \log p$ and $\alpha_n^{-1}\in\rat(\sqrt D)$ is quadratic, so we may apply the above argument to $\alpha_n^{-1}$
in place of $\alpha_n$.
\end{proof}

\end{document}